\newtheorem{theorem}{Theorem}[section]
\newtheorem{lemma}[theorem]{Lemma}
\newtheorem{corollary}[theorem]{Corollary}
\newtheorem{proposition}[theorem]{Proposition}
\newtheorem{remark}[theorem]{Remark}
\newtheorem{definition}[theorem]{Definition}
\def\ra{\rightarrow}
\def\Mbar{\overline{\mathcal{M}}}
\def\M{\mathcal{M}}
\def\Bbar{\overline{\mathcal{B}}}
\def\B{\mathcal{B}}
\def\P{\mathcal{P}}
\def\Pbar{\overline{\mathcal{P}}}
\title[Non-tautological cycles on moduli spaces of curves]{Non-tautological cycles on moduli spaces of smooth pointed curves}
\author{Dario Faro}
\address{Dario Faro  \\ Universit\`a degli Studi di Pavia  \\ Dipartimento di Matematica \\ Via Ferrata 5  \\ 27100 Pavia, Italy  }
\email{dario.faro@unipv.it}
\author{Carolina Tamborini}
\address{Carolina Tamborini  \\ Universit\"at Duisburg-Essen \\ Fakult\"at für Mathematik \\ Thea-Leymann-Str. 9 \\ 45127 Essen }
 \email{carolina.tamborini@uni-due.de}
\begin{document}
	\begin{abstract}
In recent work by Arena, Canning,  Clader,  Haburcak, Li, Mok, and Tamborini it was proven that for infinitely many values of $g$ and $n$, there exist non-tautological algebraic cohomology classes on the moduli space $\M_{g,n}$ of smooth genus $g$, $n$-pointed curves. 
Here we show how a  generalization of their technique allows to cover most of the remaining cases, proving the existence of non-tautological algebraic cohomology classes on the moduli space $\M_{g,n}$ for all but finitely many values of $g$ and $n$.

	\end{abstract}

	\thanks{
	The authors are members of GNSAGA (INdAM). D. Faro  is  partially supported by PRIN project {\em Moduli spaces and special varieties} (2022), while C. Tamborini is partially supported by DFG-Research Training Group 2553 “Symmetries and classifying spaces: analytic, arithmetic, and derived”}
	
	\maketitle

	\section{Introduction}
	
	Let $\M_{g,n}$ and $\Mbar_{g,n}$ be the moduli spaces of genus $g$, $n-$pointed smooth and stable curves, respectively. The study of the cohomology of these moduli spaces is a very interesting and difficult one. While some results are known for very small $g$ and $n$, making general statements on the full cohomology ring of $\Mbar_{g,n}$ and $\M_{g,n}$ is very hard. A key strategy to approach this problem is to analyse their \emph{tautological rings}. These are natural subrings $RH^*(\Mbar_{g,n}) \subseteq H^*(\Mbar_{g,n})$, resp.  $RH^*(\M_{g,n}) \subseteq H^*(\M_{g,n})$, that are easier to work with but still contain a large number of interesting algebraic cohomology classes. In this setting, a fundamental problem becomes understanding the relation between the cohomology ring $H^*$ and its tautological subring $RH^*$.    For  $\Mbar_{0,n}$, it has been proved in \cite{Keel} that  all cohomology classes are tautological, and in \cite{Pet14} it is shown  that all 
	even-degree cohomology classes on $\Mbar_{1,n}$  are tautological.
 As the tautological ring $RH^*(\M_{g,n}) \subseteq H^*(\M_{g,n})$ is defined as the restriction of  $RH^*(\Mbar_{g,n}) \subseteq H^*(\Mbar_{g,n})$ to the interior, these results also imply that all algebraic cohomology classes on $\M_{0,n}$ and $\M_{1,n}$ are tautological.
 We refer to \cite{FP} for a description of the main properties of the tautological ring and of the methods for detecting non-tautological classes.
 
	This note deals with the construction of explicit examples of algebraic non-tautological cohomology classes in $H^*(\Mbar_{g,n})$ and $H^*(\M_{g,n})$, generalizing work of \cite{GP}, \cite{vZ} and \cite{AGNES}, as we now explain. Consider the double cover cycle
	 \begin{gather*}
		\B_{g\ra h, n, 2m}:=\{(C; x_1,\dots, x_n, p_1, q_1,\dots, p_m, q_m)\in \M_{g,2m+n} \mid 
		\exists f: C\overset{2:1}{\longrightarrow}D,\ g(D)=h,\\ x_i\text{ is a ramification point of $f$ for }i=1,\dots,n,\ \text{and}\  f(p_i)=f(q_i)\ i=1,\dots, m \},
	\end{gather*}
    and denote by $\Bbar_{g\ra h, n, 2m}$ its closure in $\Mbar_{g,2m+n}$.
    The first example of an explicit algebraic cycle which is not tautological is due to Graber and Pandharipande \cite{GP}: they prove that
\begin{gather*}
    [\Bbar_{2\ra 1, 0, 20}]\in H^*(\Mbar_{2,20})\quad \text{and} \quad  [\B_{2\ra 1,0, 20}]\in H^*(\M_{2,20})
\end{gather*}
   are non-tautological. Subsequentely, their method and result has been extensively extended. First by Van Zelm \cite{vZ}, who proved that $[\Bbar_{g\ra 1, n, 2m}]\in H^*(\Mbar_{g,2m+n})$ is non-tautological for all $g+m\geq 12$, $g\geq 2$, and $0\leq n \leq 2g-2$. This establishes the existence of non-tautological algebraic classes in $H^*(\Mbar_{g,n})$ for all but finitely many $g$ and $n$. For finitely many cases, van Zelm also proves that the cycles $[\Bbar_{g\ra 1, n, 2m}]$ restrict to non-tautological algebraic cycles on the interior. The precise statement is that $[\B_{g\ra 1,0, 2m}]\in H^*(\M_{g,2m})$ is non-tautological for $g+m=12$ and $g\geq 2$.

    A significant progress in the approach of the problem for the interior is the one in \cite{AGNES}, where Arena, Canning,  Clader,  Haburcak, Li, Mok, and Tamborini prove that 
    $$[\B_{g\ra 1,0, 2m}]\in H^*(\M_{g,2m})$$ is non-tautological also for $g+m$ even, $g+m\geq 16$, and $g\geq 2$, and that
    $$[\B_{g\ra 2,0, 2m}]\in H^*(\M_{g,2m})$$ is non-tautological for $g+m$ odd, $g+m\geq 17$, and $g\geq 4$.
    Moreover, they also show the existence of non-tautological algebraic classes for $H^*(\M_{g,1})$ when $g=12$ or $g\geq 16$. 
    This provides the existence of non-tautological algebraic cohomology classes in $H^*(\M_{g,n})$, for infinitely many values of $g$ and $n$, as long as $n$ is even or $n=1$. In particular note that, for $g\geq 4$, it provides non-tautological algebraic cycles on $\M_{g, 2n}$ for all but finitely many $g$ and $n$.\\

In this work we show how a slight generalization of the setting and the proofs allows to extend the results in \cite{AGNES} to cover most of the remaining cases, proving the existence of non-tautological algebraic cohomology classes on the moduli space $\M_{g,n}$ for all but finitely many values of $g$ and $n$. More precisely, our proof gives the  existence of non-tautological algebraic classes in $\M_{g,2n+1}$, for all but finitely many values of $g$ and $n$, and covers some remaining cases when the number of marked points is even. Together with the previous work, this gives. 
	\begin{theorem}
			\label{teoprincipale}
		  There exists a non-tautological algebraic cycle on $\M_{g,n}$ for any $n\geq 0$ such either $2\leq g\leq 12$ and $2g+n\geq 24$, or $2g+n\geq 32$. In particular, for such $g$ and $n$ we have
		 	$RH^{2*}(\M_{g,n})\neq H^{2*}(\M_{g,n})$.
	\end{theorem}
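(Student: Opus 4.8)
\emph{Proof strategy.} The plan is to work with the double cover cycles $\B_{g\ra h,n,2m}$, now exploiting a positive number $n$ of marked ramification points (earlier work proving non-tautologicality in the interior used only $n=0$), so that the total number $N=2m+n$ of marked points may have either parity; taking $n$ odd produces cycles on $\M_{g,2k+1}$, which is what yields the odd cases missing from \cite{AGNES}. The argument then splits into two halves. First one shows that $[\Bbar_{g\ra h,n,2m}]$ is non-tautological in $H^*(\Mbar_{g,N})$, generalizing van Zelm \cite{vZ}; then one shows that this non-tautologicality survives restriction to the open part $\M_{g,N}$, generalizing \cite{AGNES}. Both halves are carried out with $h=1$ wherever the numerics allow and with $h=2$ to absorb the parity/divisibility obstruction, exactly as in \cite{AGNES}. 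At the end one checks, combinatorially, that the union of the resulting parameter ranges --- together with those already settled in \cite{vZ} and \cite{AGNES} (even $N$, and $N=1$) --- is precisely the one in the statement (the $n$ of the statement being our $N$).

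For the first half I would follow van Zelm's degeneration argument. Realizing $\Bbar_{g\ra h,n,2m}$ as the image of a space of admissible double covers, one restricts $[\Bbar_{g\ra h,n,2m}]$ to a carefully chosen boundary stratum of $\Mbar_{g,N}$ --- for $h=1$, one carrying two factors $\Mbar_{1,11}$ that arise from genus-$1$ components of a degenerate source curve --- and computes that the restriction has a nonzero K\"unneth component in $H^{11}(\Mbar_{1,11})\boxtimes H^{11}(\Mbar_{1,11})\boxtimes(\cdots)$. Since $H^{11}(\Mbar_{1,11})$, the piece coming from the weight-$12$ cusp form $\Delta$, is not tautological, this forces $[\Bbar_{g\ra h,n,2m}]$ to be non-tautological. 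The genuinely new point compared to \cite{vZ} is the bookkeeping of the $n$ marked ramification points: in the admissible-cover picture they become additional marked points on source and target, which changes the combinatorics of the stratum and of the coefficient computation but not its structure, and one must verify that the relevant coefficient stays nonzero --- this is what produces the inequalities linking $g$, $m$, and $n$ (morally van Zelm's $g+m\ge 12$, suitably adapted).

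The second half is the crux, and the main obstacle, since a class that is non-tautological on $\Mbar_{g,N}$ may become tautological, or even zero, after restriction to $\M_{g,N}$ if its non-tautological part is supported on $\partial\Mbar_{g,N}$. Following \cite{AGNES}, the idea is to locate the K\"unneth component above not merely inside $H^*(\Mbar_{g,N})$ but inside a summand of the cohomology of the chosen boundary stratum which, by a codimension count, can be reached neither by tautological classes nor by pushforwards from boundary divisors; such a summand then certifies that $[\B_{g\ra h,n,2m}]|_{\M_{g,N}}$ does not lie in $RH^*(\M_{g,N})$. This count is exactly what sharpens the numerical hypotheses: in the generic range it reproduces the thresholds $g+m\ge 16$ for $h=1$ and $g+m\ge 17$ for $h=2$ of \cite{AGNES}, while for $2\le g\le 12$ the extra freedom afforded by the marked ramification points allows one to descend to van Zelm's interior threshold $g+m=12$ and, by a variant of the same degeneration, to fill the intermediate values $g+m\in\{13,14,15\}$.

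Assembling the theorem is then bookkeeping. Given $g\ge 2$ and a number $N$ of marked points in the range of the statement --- that is, $2g+N\ge 24$ if $2\le g\le 12$, and $2g+N\ge 32$ otherwise --- one writes $N=2m+n$ with $n$ of the same parity as $N$ and $0\le n\le 2g-2$, and picks $h\in\{1,2\}$, so that all numerical hypotheses of the two halves are met; the two thresholds are precisely what comes out, after the substitution $N=2m+n$, of the constraints $m\ge 12-g$ and $m\ge 16-g$ respectively. Since the cycle class lies in even cohomological degree, the conclusion $RH^{2*}(\M_{g,N})\neq H^{2*}(\M_{g,N})$ follows at once.
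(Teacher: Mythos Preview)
Your proposal takes a genuinely different route from the paper and, as written, has real gaps.

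The paper does \emph{not} work with the ramification-marked cycles $\B_{g\to h,n,2m}$. Instead it introduces the pullback cycles
\[
\P_{g\to h,2m,n}:=\B_{g\to h,2m}\times_{\M_{g,2m}}\M_{g,2m+n},\qquad [\Pbar_{g\to h,2m,n}]=\overline p_n^{\,*}[\Bbar_{g\to h,2m}],
\]
where $\overline p_n$ forgets $n$ points; these are explicitly \emph{not} images of admissible-cover stacks. The whole argument then collapses, via the commutative square relating the gluing map $i_n:\Mbar_{h,k}\times\Mbar_{h,k+n}\to\Mbar_{g,2m+n}$ to the map $j$ of \cite{AGNES}, to the already-known identity $j^*[\Bbar_{g\to h,2m}]=\alpha[\Delta_o]$: one simply pulls this back by $\mathrm{id}\times\pi_n$ and observes that a nonzero $e\in H^{3h-3+k,0}(\Mbar_{h,k})$ stays nonzero on $\Mbar_{h,k+n}$. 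There is no new admissible-cover combinatorics, no coefficient recomputation, and crucially no bound on $n$: any $n\ge 0$ works, and only the case $h=1$ with $g+m=12$ or $g+m\ge 16$ even is needed.

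Your approach, by contrast, re-runs the van Zelm/AGNES degenerations for $\B_{g\to h,n,2m}$ with $n>0$ ramification points, and this creates two concrete problems. First, the interior step ``following \cite{AGNES}'' is proved there only for $n=0$ (and separately $n=1$); your sketch asserts the extension to general $n$ but does not supply it, and this is exactly the content at stake. Second, the Riemann--Hurwitz bound $n\le 2g+2-4h$ forces the parameter-matching you describe to leave holes: for small $g$ the intervals of $N$ covered by $g+m=12$ and by $g+m\ge 16$ do not meet, which is why you are driven to the unjustified claim of ``filling the intermediate values $g+m\in\{13,14,15\}$''. The paper never attempts, and never needs, to fill these values; the pullback trick makes $n$ a free parameter and the bookkeeping trivial.
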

	
	The new contributions of this work in Theorem \ref{teoprincipale} are the construction of non-tautological algebraic cohomology classes for
	\begin{enumerate}
		\item $\M_{g,n}$ for any $g$ and $n$ as in Theorem \ref{teoprincipale} when $n$ is odd and $n\neq 1$;
		\item $\M_{g,n}$ for $2\leq g\leq 12$ and $24 < 2g+n < 32$, when $n$ is even;
        \item $\M_{g,2n}$ for $g=2,3$, $g+n$ odd $g+n\geq 17$.
	\end{enumerate}

 The algebraic cycles that we prove being non-tautological are the pullback of the double cover cycles considered in \cite{AGNES} via morphisms forgetting a certain number of marked points, and the proof is an adaptation of the one in \cite{AGNES} to this generalized setting. \\

\textbf{Acknowledgments} We thank Samir Canning for an email  related to this work.

	\section{Preliminaries on the tautological ring of $\Mbar_{g,n}$}

	In this section we recall some preliminaries on non-tautological classes in the cohomology $H^*(\Mbar_{g,n})$, and collect some facts and remarks that we will use in the following.\\
	
	The system of tautological subrings $$RH^*(\Mbar_{g,n}) \subseteq H^*(\Mbar_{g,n})$$ is defined as the smallest system of $\mathbb{Q}$-subalgebras  that 
	
	\begin{enumerate}
		\item contains the fundamental classes $[\Mbar_{g,n}]$;
		\item is closed under pushforwards via all the maps forgetting a marking
		\begin{gather*}
			\Mbar_{g,n} \ra \Mbar_{g,n-1};
		\end{gather*}
		\item is closed under pushforwards  via all the gluing maps
			\begin{gather*}
			 \Mbar_{g_1,n_1+1}\times \Mbar_{g_2,n_2+1} \ra  \Mbar_{g_1+g_2,n_1+n_2}, \\
			 \Mbar_{g-1,n+2}\ra \Mbar_{g,n}.
		\end{gather*}
		
	\end{enumerate} 
	The tautological subring $RH^*(\M_{g,n}) \subseteq H^*(\M_{g,n})$ is defined as the image of $RH^*(\Mbar_{g,n})$ under the restriction map. \\

	Tautological classes are by definition algebraic, hence a straightforward observation is:

	\begin{itemize}
		\item Tautological classes have only contributions in $H^{2k}(\Mbar_{g,n})$.
		\item More precisely, tautological classes have only contributions in $H^{k,k}(\Mbar_{g,n})$.
	\end{itemize}
	
	In particular, the existence of holomorphic forms on some $\Mbar_{g,n}$, gives examples of non-tautological classes. In this note, a key ingredient will be the following result
	
	\begin{proposition}[\cite{GP}, Prop. 1]\label{GrPa}
		Let $i: \Mbar_{g_1,n_1+1}\times \Mbar_{g_2,n_2+1} \ra  \Mbar_{g_1+g_2,n_1+n_2}$ be a gluing map. If $\gamma\in RH^*(\Mbar_{g_1+g_2,n_1+n_2})$, then $i^*(\gamma)$ has a tautological K\"unneth decomposition, i.e.: 
		\begin{gather*}
			i^*(\gamma)\in RH^*(\Mbar_{g_1,n_1+1})\otimes RH^*( \Mbar_{g_2,n_2+1}).
		\end{gather*}
	\end{proposition}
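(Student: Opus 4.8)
The statement to prove is Proposition~\ref{GrPa}, attributed to Graber--Pandharipande: pulling back a tautological class along a gluing map yields a class whose K\"unneth decomposition lies in $RH^*(\Mbar_{g_1,n_1+1})\otimes RH^*(\Mbar_{g_2,n_2+1})$.

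\medskip

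The plan is to argue by induction on the construction of the tautological ring, exploiting the fact that $RH^*$ is generated (as a system of subalgebras) by the fundamental classes under pushforward along forgetful and gluing maps. The base case is immediate: $i^*[\Mbar_{g_1+g_2,n_1+n_2}]$ is, up to a normal bundle Euler class contribution, the product of fundamental classes of the two factors, and in any case lies in the (tautological) image of the Chow/cohomology ring of the product. For the inductive step I would consider a tautological class $\gamma$ obtained as a pushforward $\pi_*(\delta)$ along either a forgetful or a gluing morphism $\pi$, with $\delta$ a class for which the conclusion is already known on the relevant moduli space, and then compute $i^*\pi_*(\delta)$ by excess-intersection / base-change. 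The key technical input is that the fiber product of a gluing map with a forgetful or gluing map decomposes into a finite union of ``boundary strata'' each of which is itself a product of gluing/forgetful maps on smaller moduli spaces, together with the compatibility of $\psi$- and $\kappa$-classes under these maps. Thus $i^*\pi_*(\delta) = \sum (\pi')_*(j')^*(\delta)$ (up to tautological correction classes coming from the normal bundles), where the $j'$ and $\pi'$ are again tautological-type morphisms, and one invokes the inductive hypothesis together with the K\"unneth formula and the fact that $RH^*\otimes RH^*$ is closed under the induced operations.

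\medskip

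Concretely, I would organize the proof as follows. First, fix the gluing map $i\colon \Mbar_{g_1,n_1+1}\times\Mbar_{g_2,n_2+1}\to\Mbar_{g,n}$ (the non-separating case $\Mbar_{g-1,n+2}\to\Mbar_{g,n}$ is handled the same way, with the target of the K\"unneth statement replaced appropriately). Second, recall Getzler--Kontsevich-style formulas, or rather the elementary intersection-theoretic description in the appendix of Graber--Pandharipande, for how a boundary divisor or a forgetful map pulls back under $i$: the pullback of a boundary class is a sum of boundary classes of the two factors plus diagonal-type terms, all visibly of product form. Third, set up the induction on the number of generating operations used to build $\gamma$, and push the computation through using the projection formula and the self-intersection formula for the normal bundle of $i$ (whose first Chern class is $-\psi' - \psi''$, a tautological class on the product). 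The main obstacle, and the step requiring genuine care, is the bookkeeping of the fiber-product decomposition when $\pi$ is itself a gluing map: one must check that every component of $(\Mbar_{g_1,n_1+1}\times\Mbar_{g_2,n_2+1})\times_{\Mbar_{g,n}}\Mbar_{\bullet}$ is again a product of moduli spaces mapping in by gluing/forgetful morphisms, so that the inductive hypothesis applies factor-by-factor and the resulting classes genuinely lie in $RH^*\otimes RH^*$ rather than merely in $RH^*$ of the product.

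\medskip

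An alternative, cleaner route avoids the normal-bundle subtleties: use the description of $RH^*(\Mbar_{g,n})$ via the Graber--Pandharipande additive generators, namely pushforwards $\xi_{\Gamma*}(\prod \psi^{a}\kappa^{b})$ along boundary gluing maps $\xi_\Gamma$ indexed by stable graphs $\Gamma$. Under this description it suffices to show that for every such decorated-graph generator $\gamma$, the pullback $i^*\gamma$ along the single-edge gluing map $i$ splits as a tensor. This reduces, by the excess-intersection formula applied to $\xi_\Gamma$ and $i$, to an entirely combinatorial statement about how stable graphs restrict: cutting the edge corresponding to $i$ splits the graph $\Gamma$ into pieces distributed between the two sides, and the $\psi/\kappa$ decorations follow along. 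The hard part remains the same — verifying that the combinatorial splitting is exact and that the excess classes are themselves of tensor form — but it is more transparent in graph language, and it is the form in which the result is actually proved in \cite{GP}.
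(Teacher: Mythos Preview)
The paper does not supply its own proof of this proposition: it is stated with the attribution ``\cite{GP}, Prop.~1'' and used as a black box throughout, so there is no argument in the paper to compare your proposal against. Your sketch is a reasonable outline of the strategy in the original Graber--Pandharipande paper (reduce to the decorated-stable-graph generators and compute the pullback along a single-edge gluing via excess intersection, observing that the resulting combinatorics and excess classes split as tensors), and you correctly flag the one genuinely delicate point, namely the fibre-product decomposition when the class being pulled back is itself a boundary pushforward. For the purposes of the present paper, however, no proof is expected here; the appropriate response is simply to cite \cite{GP} as the authors do.
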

	
The proposition implies that the pullback of a tautological class via a gluing morphism has only contributions of type $H^{p,p}(\Mbar_{g_1,n_1+1}) \otimes H^{q,q}( \Mbar_{g_2,n_2+1})$. As in \cite{AGNES}, and similarly to \cite{GP} and \cite{vZ}, we will show that some algebraic classes are non-tautological, by showing that their pullback via some gluing morphism has a non-zero contribution of type $H^{p,0}\otimes H^{0,p}$. We will need the following:
	
	\begin{proposition}\label{nonvanishing} The following non-vanishing results hold:
		\begin{enumerate}
			\item 	$H^{k,0}(\Mbar_{1,k})\neq 0$ for all $k\geq 11$ odd and $k\neq 13$ 
			\item	$H^{k+3,0}(\Mbar_{2,k})\neq 0$ for all $k\geq 14$ even. 
		\end{enumerate}
	\end{proposition}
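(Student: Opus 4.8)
The plan is to exhibit the required holomorphic forms as contributions of cusp forms to the cohomology of these moduli spaces, using the known descriptions of $H^\bullet(\M_{1,n})$ and $H^\bullet(\M_{2,n})$ in terms of local systems on $\M_{1,1}$ and on $\M_2$. The two parts are proved the same way; the only genus‑dependent input is the modularity dictionary — Eichler--Shimura for $\mathrm{SL}_2(\mathbb Z)$ in genus $1$, the Faber--van der Geer description of $H^\bullet(\M_2,\mathbb V_{l,m})$ (as established by Petersen) in genus $2$ — together with the relevant nonvanishing of spaces of cusp forms.

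For part $(1)$, let $\mathbb V=R^1(\text{universal curve})_*\mathbb Q$ be the standard weight‑$1$ variation of Hodge structure on $\M_{1,1}$. By Getzler's (and Petersen's) analysis of the cohomology of $\M_{1,n}$ and $\Mbar_{1,n}$ — concretely, via the Leray spectral sequence of the forgetful map $\M_{1,k}\to\M_{1,1}$, whose fibre is an open subvariety of $E^{k-1}$ carrying $\mathrm{Sym}^{k-1}\mathbb V$ in degree $k-1$ — the Hodge structure $H^1_!(\M_{1,1},\mathrm{Sym}^{k-1}\mathbb V)$ occurs in $H^k(\Mbar_{1,k})$; the local system $\mathrm{Sym}^{k-1}\mathbb V$ vanishes unless $k$ is odd (the elliptic involution acts by $-1$ on $\mathbb V$), which accounts for the parity. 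By Eichler--Shimura this Hodge structure is pure of weight $k$, and its piece of Hodge type $(k,0)$ is the space $S_{k+1}$ of weight‑$(k+1)$ cusp forms for $\mathrm{SL}_2(\mathbb Z)$. For $k$ odd with $k\ge 11$ and $k\ne 13$ the weight $k+1$ is even, at least $12$ and different from $14$, so $\dim S_{k+1}\ge 1$; hence $H^{k,0}(\Mbar_{1,k})\supseteq S_{k+1}\ne 0$.

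For part $(2)$, let $\mathbb V_{l,m}$ ($l\ge m\ge 0$) denote the weight‑$(l+m)$ variation of Hodge structure on $\M_2$ attached to the irreducible $\mathrm{Sp}_4$‑representation of highest weight $(l,m)$; it is nonzero only when $l+m$ is even (the hyperelliptic involution acts by $-1$ on $H^1$), which forces $k$ even. Taking $l+m=k$, the local system $\mathbb V_{l,m}$ occurs in $H^k$ of the fibre $\mathrm{Conf}_k(C)$ of $\M_{2,k}\to\M_2$ (it already appears in $H^1(C)^{\otimes k}$ and survives restriction from $C^k$ to the configuration space, the diagonal corrections involving the symplectic form and hence only smaller $\mathbb V_{l',m'}$), so by the work of Faber--van der Geer and Petersen $H^3_!(\M_2,\mathbb V_{l,m})$ occurs in $H^{k+3}(\Mbar_{2,k})$, is pure of weight $k+3$, and has a piece of Hodge type $(k+3,0)$ of dimension $\dim S_{l-m,\,m+3}$, the space of vector‑valued Siegel cusp forms of type $\mathrm{Sym}^{l-m}\otimes\det^{m+3}$ for $\mathrm{Sp}_4(\mathbb Z)$ (the extreme Hodge piece picks up every Siegel cusp form, Saito--Kurokawa lifts included). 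It then remains to check that for every even $k\ge 14$ there is a pair $(l,m)$ with $l+m=k$ and $S_{l-m,m+3}\ne 0$: for $k\equiv 2\pmod 4$ take $(l,m)=(k/2,k/2)$, giving scalar Siegel cusp forms of the even weight $k/2+3\ge 10$ (e.g.\ $\chi_{10}$ for $k=14$, $\chi_{12}$ for $k=18$); for $k\equiv 0\pmod 4$, where this scalar choice is unavailable, take instead $l-m=6$ (e.g.\ $(l,m)=(11,5)$ for $k=16$, using Ibukiyama's cusp form spanning $S_{6,8}$), nonvanishing for all larger such $k$ following from Tsushima's dimension formula. This yields $H^{k+3,0}(\Mbar_{2,k})\ne 0$.

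The mechanism — the Leray spectral sequence of the forgetful map, Eichler--Shimura / Faber--van der Geer, and the Hodge‑theoretic descent from $\M_{g,n}$ to $\Mbar_{g,n}$ (a class of Hodge type $(p,0)$ in $H^p(\M_{g,n})$ is pure of weight $p$, hence lies in the image of $H^p(\Mbar_{g,n})$ and lifts to $H^{p,0}(\Mbar_{g,n})$) — is by now standard and is exactly what drives \cite{GP}, \cite{vZ} and \cite{AGNES}. I expect the only genuinely non‑formal step to be the last one in part $(2)$: showing that, for every even $k\ge 14$, the constraint $l+m=k$ can be met by a nonzero space of vector‑valued Siegel cusp forms of type $\mathrm{Sym}^{l-m}\otimes\det^{m+3}$. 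Since the convenient scalar option disappears when $k\equiv 0\pmod 4$, this forces an appeal to explicit (vector‑valued) Siegel cusp form dimension formulas, and it is also the reason the bound sits at $k=14$ rather than lower.
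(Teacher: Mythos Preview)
The paper does not prove this proposition at all: it is quoted as a known result, with part~(1) attributed to \cite[Proposition~2.2]{CLP} and part~(2) to \cite[Proposition~2.6]{AGNES}. So there is no ``paper's own proof'' to compare against beyond these citations.

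That said, your sketch is essentially the argument those references carry out. Part~(1) is exactly the standard story: the Leray spectral sequence for the universal elliptic curve, Eichler--Shimura identifying the $(k,0)$-piece of $H^1_!(\M_{1,1},\mathrm{Sym}^{k-1}\mathbb V)$ with $S_{k+1}$, and the classical dimension count for level-one cusp forms. Your descent step (a $(p,0)$-class on the open part lifts to the compactification via purity of $W_pH^p$) is also the right mechanism and is what makes the statement about $\Mbar$ rather than $\M$.

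For part~(2) your outline is along the correct lines and matches the strategy in \cite{AGNES}: realise $\mathbb V_{l,m}$ with $l+m=k$ in the fibre cohomology, invoke the Petersen/Faber--van der Geer description to identify the extreme Hodge piece of $H^3_!$ with a space of vector-valued Siegel cusp forms, and then exhibit a nonzero such space for each even $k\ge 14$. Two comments. First, one must pass between $\M_2$ and $\mathcal A_2$ (Torelli is an open immersion in genus~$2$) to use the modular-forms dictionary; this is routine but should be said. Second, as you yourself flag, the genuinely delicate point is the case-by-case nonvanishing of $S_{l-m,m+3}$; your choices ($l=m$ for $k\equiv 2\pmod 4$ giving scalar weights $\ge 10$, and $l-m=6$ with Tsushima's formula for $k\equiv 0\pmod 4$) are reasonable, but the precise bookkeeping in \cite{AGNES} may differ, and in a full write-up you would need to pin down exactly which dimension formulas you are invoking and their ranges of validity.
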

	 For the first statement see e.g. \cite[Proposition 2,2]{CLP}, the second is \cite[Proposition 2.6]{AGNES}. From this immediately we have:

	\begin{lemma}\label{lemma1} The following non-vanishing results hold:
		\begin{enumerate}
			\item 	$H^{k,0}(\Mbar_{1,k+n})\neq 0$ for all $k\geq 11$ odd, $k\neq 13$, and $n\geq 0$;
			\item	$H^{k+3,0}(\Mbar_{2,k+n})\neq 0$ for all $k\geq 14$ even and $n\geq 0$.
		\end{enumerate}
	\end{lemma}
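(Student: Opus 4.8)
The plan is to deduce Lemma \ref{lemma1} from Proposition \ref{nonvanishing} by propagating the non-vanishing of holomorphic forms along the forgetful morphisms. Recall that for each $m\geq 1$ the map $\pi\colon \Mbar_{g,m+1}\to \Mbar_{g,m}$ forgetting the last marked point is a proper, flat, surjective morphism of smooth proper Deligne--Mumford stacks, with geometrically connected fibres; moreover, since $m\geq 1$, it admits a section (e.g.\ any of the tautological sections $\sigma_i$ of the universal curve given by the marked points).

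The first step is to observe that any such $\pi$ induces an \emph{injection} $\pi^*\colon H^{p,0}(\Mbar_{g,m})\hookrightarrow H^{p,0}(\Mbar_{g,m+1})$ on spaces of holomorphic $p$-forms. Indeed, $\pi^*$ is injective on global holomorphic forms for any dominant morphism of smooth proper varieties (or smooth proper DM stacks): if $\omega\neq 0$, pick a point in the dense open locus where $\pi$ is smooth and $\omega$ does not vanish; then $\pi^*\omega$ is nonzero there. Equivalently, one can simply use that the section $\sigma_i$ satisfies $\sigma_i^*\circ\pi^*=\mathrm{id}$ on $H^{p,0}$. On the level of the coarse spaces (which have finite quotient singularities) this behaves in the standard way, so no subtlety arises from the orbifold structure.

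Iterating this $n$ times yields a chain of injections
\begin{gather*}
H^{p,0}(\Mbar_{g,k})\hookrightarrow H^{p,0}(\Mbar_{g,k+1})\hookrightarrow\cdots\hookrightarrow H^{p,0}(\Mbar_{g,k+n}),
\end{gather*}
which is legitimate because all intermediate moduli spaces carry at least $k\geq 1$ marked points, so every forgetful map in the chain admits a section as above. Applying this with $(g,p)=(1,k)$ for $k\geq 11$ odd and $k\neq 13$, and with $(g,p)=(2,k+3)$ for $k\geq 14$ even, Proposition \ref{nonvanishing}(1) resp.\ (2) shows that the leftmost term is nonzero, hence so is $H^{p,0}(\Mbar_{g,k+n})$; this is exactly statements (1) and (2) of the Lemma.

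The only point that needs a word of care is the injectivity of $\pi^*$ on holomorphic forms in the stacky setting, but since $\Mbar_{g,m}$ is smooth and proper and $\pi$ is the universal curve (in particular flat and proper with connected fibres and with sections), this is routine, and it is essentially the sole ingredient beyond Proposition \ref{nonvanishing}.
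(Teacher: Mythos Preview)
Your proof is correct and follows the same approach as the paper: the paper simply notes that the case $n=0$ is Proposition~\ref{nonvanishing} and that the case $n>0$ follows by pulling back via the forgetful morphisms $\Mbar_{1,k+n}\to\Mbar_{1,k}$ (resp.\ $\Mbar_{2,k+n}\to\Mbar_{2,k}$). You have given a more detailed justification of why these pullbacks are injective on holomorphic forms (via sections, or via dominance), which is a welcome elaboration of what the paper leaves implicit.
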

	\begin{proof}
		For $n=0$ this is the previous proposition. The non-vanishing for $n>0$ is obtained from $n=0$ by pulling back via the forgetful morphisms $\Mbar_{1,k+n}\ra  \Mbar_{1,k}$ resp. $\Mbar_{2,k+n}\ra \Mbar_{2,k}$.
	\end{proof}

    More precisely, we will use the non-vanishing results above together with the two following Lemmas. 
		
	\begin{lemma}\label{lemma2} If $H^{k,0}(\Mbar_{g,n})\neq 0 $, then there is a non-tautological term in the K\"unneth decomposition of $[\Delta]$, where $\Delta\subset \Mbar_{g,n} \times \Mbar_{g,n}$ denotes the diagonal.
	\end{lemma}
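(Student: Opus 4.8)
The plan is to use the Künneth decomposition of the diagonal class $[\Delta]$ in $H^*(\Mbar_{g,n}\times\Mbar_{g,n})$ and the fact that it acts as the identity on cohomology. Recall that, writing $H^* = H^*(\Mbar_{g,n})$ and picking a homogeneous basis, the class of the diagonal decomposes as
\begin{gather*}
[\Delta] = \sum_i \alpha_i \otimes \beta_i,
\end{gather*}
where $\{\alpha_i\}$ and $\{\beta_i\}$ are Poincaré-dual bases of $H^*$; in particular, for each bidegree $(p,q)$ appearing in $H^*$, the component of $[\Delta]$ in $H^{p,q}\otimes H^{d-p,d-q}$ (with $d=\dim_{\mathbb C}\Mbar_{g,n}$) is precisely the image of the identity map under the isomorphism $\mathrm{Hom}(H^{p,q},H^{p,q})\cong H^{p,q}\otimes (H^{p,q})^\vee \cong H^{p,q}\otimes H^{d-p,d-q}$, where the last step uses Poincaré duality. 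Concretely, this component is $\sum_j e_j\otimes e_j^\vee$ for $\{e_j\}$ a basis of $H^{p,q}$ and $\{e_j^\vee\}$ the dual basis in $H^{d-p,d-q}$.

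Next I would apply this with $(p,q)=(k,0)$. By hypothesis $H^{k,0}(\Mbar_{g,n})\neq 0$, so the corresponding Künneth component of $[\Delta]$ lies in $H^{k,0}\otimes H^{d-k,d}$ and is \emph{non-zero} (it is the image of a nonzero identity operator under an isomorphism). I then invoke the contrapositive of Proposition \ref{GrPa}: if $[\Delta]$ were tautological, its pullback along the (in this case trivial, but one can also view $\Delta$ via the diagonal as cut out by gluing-type constraints) — more to the point, since $[\Delta]\in H^*(\Mbar_{g,n}\times\Mbar_{g,n})$ and the tautological ring of a product is generated by exterior products of tautological classes of the factors, any tautological class has a Künneth decomposition with all terms of type $H^{p,p}\otimes H^{q,q}$. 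A term in $H^{k,0}\otimes H^{d-k,d}$ with $k$ odd (or more simply with $k\neq 0$ on the holomorphic side) cannot be of this type. Hence the $(k,0)$-component witnesses a non-tautological term in the Künneth decomposition of $[\Delta]$.

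The one point requiring a little care — and the main obstacle — is justifying that the relevant Künneth component of $[\Delta]$ is genuinely non-zero as an element of $H^{k,0}\otimes H^{d-k,d}$, rather than being killed after projecting to this summand. This follows from the standard fact that the Künneth components of the diagonal are the "coordinate projections" associated to the Künneth decomposition of $H^*$, together with the compatibility of the Hodge decomposition with the Künneth isomorphism on the smooth projective variety $\Mbar_{g,n}$: the $(k,0)$-part of $H^*$ pairs non-degenerately with the $(d-k,d)$-part under Poincaré duality, so the identity on $H^{k,0}$ maps to a non-zero element. Once this is in place, the comparison with the type constraint on tautological classes (all Künneth terms of type $(p,p)\otimes(q,q)$, which for instance forces purely even total degree $2p+2q$ and Hodge type balanced in each factor) immediately yields the claim, since $(k,0)\otimes(d-k,d)$ fails the balancedness in each factor whenever $k>0$.
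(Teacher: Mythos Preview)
Your proposal is correct and follows essentially the same approach as the paper: both write the diagonal as $[\Delta]=\sum_i (-1)^{\deg e_i} e_i\otimes \hat{e_i}$ for a basis $\{e_i\}$ and its Poincar\'e-dual basis, and observe that any nonzero $e\in H^{k,0}(\Mbar_{g,n})$ contributes a term which cannot be tautological since tautological classes live in $\bigoplus_p H^{p,p}$. Your invocation of Proposition~\ref{GrPa} is an unnecessary detour (that proposition is about pullbacks along gluing maps into a single $\Mbar_{g,n}$, not about the product $\Mbar_{g,n}\times\Mbar_{g,n}$), but you immediately course-correct to the direct Hodge-type argument, which is exactly what the paper's one-line proof uses.
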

	\begin{proof}
		The Lemma is an immediate consequence of the K\"unneth decomposition of $[\Delta]$. Indeed, if $\{e_i\}_{i\in I}$ is a basis for $H^*(\Mbar_{g,n})$, then $[\Delta]=\sum_{i\in I} (-1)^{\deg e_i} e_i\otimes \hat{e_i}$, where $\{\hat{e_i}\}$ is the dual basis.
	\end{proof}
    \begin{lemma}\label{lemma3}
    	A class $0\neq e\in H^{k,0}(\Mbar_{g,n})$ restricts to a non-zero class in $H^k(\M_{g,n})$.
    \end{lemma}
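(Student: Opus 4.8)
The plan is to exploit the fact that a nonzero class in $H^{k,0}(\Mbar_{g,n})$ is represented by a holomorphic $k$-form on the smooth projective variety $\Mbar_{g,n}$, and that such forms cannot vanish identically on the dense Zariski-open subset $\M_{g,n}$. Concretely, I would first recall that $\Mbar_{g,n}$ is a smooth proper Deligne--Mumford stack (or pass to a smooth projective variety via a level structure, noting that the argument is insensitive to this), so that $H^{k,0}(\Mbar_{g,n}) = H^0(\Mbar_{g,n}, \Omega^k_{\Mbar_{g,n}})$ by Hodge theory, and a nonzero element $e$ corresponds to a nonzero global holomorphic $k$-form $\omega$.

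Next I would argue that the restriction map $H^0(\Mbar_{g,n}, \Omega^k) \to H^0(\M_{g,n}, \Omega^k)$ is injective. This is because $\M_{g,n}$ is a dense open subset of the irreducible (hence connected) variety $\Mbar_{g,n}$, and a holomorphic section of a locally free sheaf on a connected complex manifold that vanishes on a nonempty open set vanishes identically by the identity principle; equivalently, the complement $\Mbar_{g,n}\setminus \M_{g,n}$ is a proper closed subvariety, so the restriction of sections of a torsion-free (indeed locally free) sheaf to the open complement is injective. Hence $\omega|_{\M_{g,n}}$ is a nonzero holomorphic $k$-form on $\M_{g,n}$, giving a nonzero class in $H^0(\M_{g,n}, \Omega^k) = H^{k,0}(\M_{g,n}) \subseteq H^k(\M_{g,n})$ (the latter via the natural map from the space of holomorphic forms, or from the Hodge–de Rham / Frölicher spectral sequence, noting $H^{k,0}$ always injects into $H^k$ since there is no room for it to be killed by a differential). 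One should also observe that the class of $\omega|_{\M_{g,n}}$ in $H^k(\M_{g,n})$ is precisely the image of $e$ under the restriction map $H^k(\Mbar_{g,n}) \to H^k(\M_{g,n})$, so the statement as phrased follows.

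The main subtlety — and the only place one needs to be slightly careful — is the injectivity of $H^{k,0}$ into $H^k$ and the compatibility of restriction with the Hodge filtration. For a smooth proper variety this is immediate: $H^{k,0} = F^k H^k$ is a subspace of $H^k$, functorially in the variety, so restriction of a nonzero $(k,0)$-class is the same whether computed in de Rham cohomology or at the level of forms. For the open variety $\M_{g,n}$, which is only quasi-projective, one uses that $\Mbar_{g,n}$ is a smooth compactification with normal crossings boundary (Deligne's mixed Hodge theory): the weight filtration on $H^k(\M_{g,n})$ has $W_k H^k(\M_{g,n}) = \mathrm{im}(H^k(\Mbar_{g,n}) \to H^k(\M_{g,n}))$, and on this lowest weight piece the Hodge filtration is the one inherited from $\Mbar_{g,n}$; in particular a holomorphic form of type $(k,0)$ on $\Mbar_{g,n}$ restricts to a class of type $(k,0)$ in $W_kH^k(\M_{g,n})$, and this restriction is injective as argued above. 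I expect writing this down to be a one-line invocation of standard mixed Hodge theory rather than a genuine obstacle, so the proof below will simply state the identity-principle argument and cite the standard fact that restriction $H^{k,0}(\Mbar_{g,n}) \to H^k(\M_{g,n})$ is injective.

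\begin{proof}
    Since $\Mbar_{g,n}$ is smooth and proper, a nonzero class $e \in H^{k,0}(\Mbar_{g,n}) = H^0(\Mbar_{g,n}, \Omega^k)$ is represented by a nonzero global holomorphic $k$-form $\omega$. As $\M_{g,n}$ is a dense Zariski-open subset of the irreducible variety $\Mbar_{g,n}$, its complement is a proper closed subvariety, and hence the restriction map on sections of the locally free sheaf $\Omega^k$ is injective; concretely, a holomorphic form vanishing on the nonempty open set $\M_{g,n}$ of the connected complex manifold $\Mbar_{g,n}$ vanishes identically by analytic continuation. Therefore $\omega|_{\M_{g,n}}$ is a nonzero holomorphic $k$-form on $\M_{g,n}$. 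By mixed Hodge theory, the class of $\omega|_{\M_{g,n}}$ in $H^k(\M_{g,n})$ equals the image of $e$ under the restriction map $H^k(\Mbar_{g,n}) \to H^k(\M_{g,n})$, and this image is nonzero: it lies in the lowest weight piece $W_k H^k(\M_{g,n})$, which carries a pure Hodge structure with $(k,0)$-part the image of $H^{k,0}(\Mbar_{g,n})$, and the restriction is injective there by the same analytic continuation argument applied on the level of the corresponding forms. Hence $e$ restricts to a nonzero class in $H^k(\M_{g,n})$.
\end{proof}
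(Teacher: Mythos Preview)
Your argument is correct. Note that the paper itself does not give a proof of this lemma but simply refers to \cite[Fact~2.3]{AGNES}, so there is no ``paper's approach'' to compare against beyond that citation; your write-up supplies a self-contained justification.

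One remark on presentation: the final injectivity step---that a nonzero class in $H^{k,0}(\Mbar_{g,n})$ maps to a nonzero class in $H^k(\M_{g,n})$---is the only place requiring care, and your phrase ``by the same analytic continuation argument applied on the level of the corresponding forms'' is slightly elliptical. Analytic continuation gives injectivity of $H^0(\Mbar_{g,n},\Omega^k)\to H^0(\M_{g,n},\Omega^k)$ on the level of \emph{forms}, but one still needs that the resulting class in $H^k(\M_{g,n})$ is nonzero. Two clean ways to pin this down: (i) identify $F^kH^k(\M_{g,n})$ with $H^0(\Mbar_{g,n},\Omega^k(\log\partial\M_{g,n}))$ via Deligne's log de Rham complex, so that the map $H^{k,0}(\Mbar_{g,n})\to F^kH^k(\M_{g,n})\hookrightarrow H^k(\M_{g,n})$ is induced by the sheaf inclusion $\Omega^k\hookrightarrow\Omega^k(\log\partial)$ and is therefore injective on global sections; or (ii) observe that the kernel of the restriction $H^k(\Mbar_{g,n})\to H^k(\M_{g,n})$ is the image of the Gysin pushforward from the boundary, hence has Hodge coniveau $\geq 1$ and contains no $(k,0)$-classes. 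Either of these makes your argument watertight; the second is likely what \cite{AGNES} records.
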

    For the proof, see Fact 2.3 in \cite{AGNES}.
	  
	\section{Double cover cycles and their pullbacks}

	In this section we briefly provide the necessary definitions and background about the double cover cycles that will be object of this paper. Indeed, they will yield, in cohomology, the desired non-tautological classes. \\
	
	We start by borrowing the following definition of the moduli stack of admissible double covers from \cite{AGNES} and references therein.
	\begin{definition} We denote by $\overline{Adm}(g,h)_{r+2m}$ the stack of admissible double covers, parametrizing tuples
		\[
		(f:C \rightarrow D; x_1, \ldots, x_r; y_1, y_2, \dots, y_{2m-1}, y_{2m})
		\]
		where
		\begin{enumerate}
			\item $f:C\to D$ is a double cover of connected nodal curves of arithmetic genus $g$ and $h$, respectively;
			\item $x_1, \ldots, x_r \in C$ are precisely the smooth ramification points of $f$;
			\item $y_1,\dots, y_{2m}\in C$ are such that $f(y_{2i-1})=f(y_{2i})$;
			\item the image under $f$ of each node of $C$ is a node of $D$;
			\item the pointed curves $\big(C; (x_i)_{i=1}^r, (y_i)_{i=1}^{2m}\big)$ and $\big(D; (f(x_i))_{i=1}^r, f(y_{2i-1})_{i=1}^m\big)$ are stable. 
		\end{enumerate}	
		We call $\Bbar_{g\ra h, 2m}$ the image of $\overline{Adm}(g,h)_{r+2m}$ under the morphism 
		$$\overline{Adm}(g,h)_{r+2m}\ra \Mbar_{g,2m}$$ 
		forgetting the all the smooth ramification points, i.e., sending an admissible cover to the stabilization of $\big(C; (y_i)_{i=1}^{2m}\big)$. Finally, we denote by  $\B_{g\ra h, 2m}$ the restriction of $\Bbar_{g\ra h, 2m}$ to $\M_{g,2m}$.
		
	\end{definition}
	Note that $\B_{g\ra h, 2m}$ (resp.  $\Bbar_{g\ra h, 2m}$) writes, in the notations of \cite{AGNES}, as  $\B_{g\ra h, 0, 2m}$ (resp.  $\Bbar_{g\ra h, 0, 2m}$), i.e., here we are dropping the $0$ from the notation. 
	In \cite{AGNES} it is shown that the classes of the cycles $\Bbar_{g\ra h, 2m}$ and $\B_{g\ra h, 2m}$ in $H^*(\Mbar_{g,2m})$ (resp. $H^*(\M_{g,2m})$) are, under suitable numerical conditions, examples of non-tautological algebraic cohomology classes. In this work we will make use of the following modification of these cycles.

	\begin{definition}
		Consider the forgetful morphism
		$
		{p}_n: \M_{g,2m+n}\ra \M_{g,2m}
		$
		and set 
		\begin{gather*}	
			\P_{g\ra h, 2m, n}:=\B_{g\ra h, 2m} \times_{\M_{g,2m}} \M_{g,2m+n}.
		\end{gather*}
		Similarly let $\overline{p}_n: \Mbar_{g,2m+n}\ra \Mbar_{g,2m}$ and 
		\begin{gather*}
			\Pbar_{g\ra h, 2m, n}:=\Bbar_{g\ra h, 2m} \times_{\Mbar_{g,2m}} \Mbar_{g,2m+n}.
		\end{gather*}	
		
	\end{definition}
	
	Observe that if $\iota: \M_{g,2m+n} \rightarrow \Mbar_{g,2m+n}$ is the inclusion,  then 
	\begin{equation}
		\label{eq2}
		\iota^*[\Pbar_{g\ra h, 2m, n}]=[\P_{g\ra h, 2m, n}]. 
	\end{equation}
	
	As explained for example in \cite{AGNES}, one can easily compute that $\dim \Bbar_{g\ra h, 2m}=2g+m-h-1$. Hence $\dim \Pbar_{g\ra h, 2m, n}=2g+m-h-1+n$, and we get:
	\begin{gather*}
		\left[\Pbar_{g\to h, 2m, n }\right]\in H^{2(g+h+m-2)}\left(\Mbar_{g,2m+n}\right), \quad \left[ \P_{g\to h, 2m, n}\right] \in H^{2(g+h+m-2)} \left( \M_{g, 2m+n}\right).
	\end{gather*}
	Moreover by definition we have 
	\begin{gather}\label{classecoh}
		\left[\Pbar_{g\to h, 2m, n }\right]=(\overline{p}_n)^*	\left[\Bbar_{g\ra h, 2m}\right].
	\end{gather}
	{Note that, differently to  $\Bbar_{g\to h, 2m }$, $\Pbar_{g\to h, 2m, n }$ is not the image in $\Mbar_{g,2m+n}$ of a stack of admissible covers.}

	\section{Proof of the theorem}
	In this section we prove Theorem \ref{teoprincipale}. In the following, the statements and the proofs generalize those in \cite{AGNES},  which correspond to the case $n=0$.\\
	
	For $g, m, h, k, n \geq 0$ satisfying $g\geq 2h$ and $g+m=k+2h-1$, let
	\begin{gather*}
		i_n: \Mbar_{h,k} \times \Mbar_{h,k+n} \ra \Mbar_{g,2m+n}
	\end{gather*}
	be the gluing map defined by
	\[i_n\big( (C_1; x_1, \ldots, x_k), \; (C_2; y_1, \ldots, y_{k+n}) \big) = (C; x_{k-m+1}, y_{k-m+1}, \ldots, x_k, y_k, y_{k+1}, \ldots, y_{k+n}),\]
	where $C$ is the genus $g$ curve obtained by pairwise gluing the first $k-m$ marked points of $C_1$ to the first $k-m$ marked points of $C_2$. Of course, when $h=0$ we require that $k\geq 3$ and, when $h=1$ that $k\geq 1$.
	We denote by $j_n$ the composition
	\begin{gather}\label{eq:defj}
		j_n:\M_{h,k}\times \M_{h,k+n}\hookrightarrow \Mbar_{h,k}\times \Mbar_{h,k+n}\xrightarrow{i}\Mbar_{g,2m+n}.
	\end{gather}
	
	Finally, we denote by $\Delta_o$ the diagonal in $\M_{h,k}\times \M_{h,k}$, and let $\Delta_o^n:=\Delta_o \times_{(\M_{h,k}\times \M_{h,k})} \M_{h,k}\times \M_{h,k+n}$. Similarly, we denote by $\Delta$ the diagonal in $\Mbar_{h,k}\times \Mbar_{h,k}$, and $\Delta^n:=\Delta \times_{(\Mbar_{h,k}\times \Mbar_{h,k})} \Mbar_{h,k}\times \Mbar_{h,k+n}$. Observe that if $\alpha: \M_{h,k}\times \M_{h,k+n} \rightarrow \Mbar_{h,k}\times \Mbar_{h,k+n}$ is the inclusion, we have 
	\begin{equation}
		\label{eq3}
		\alpha^*[\Delta^n]=[\Delta^n_o].
	\end{equation}
	
	\begin{lemma}\label{diagonale} Let $g\geq 2h$ and $g+m=k+2h-1$. If $m\geq 1$ or $g\geq 4h+2$, then
		\begin{gather*}
			(j_n)^*[\Pbar_{g\ra h, 2m, n}]=\alpha [\Delta_o^n]
		\end{gather*}
		where $\alpha\in \mathbb{Q}_{>0}$.
	\end{lemma}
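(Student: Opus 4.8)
The plan is to deduce the statement from its $n=0$ instance — which is exactly the corresponding lemma of \cite{AGNES}, proved there under the hypothesis $m\geq 1$ or $g\geq 4h+2$ — by means of the standard compatibility between forgetful and gluing morphisms. Geometrically, the $n=0$ case records that the image of $i_0$ meets $\Bbar_{g\ra h, 2m}$ generically along the locus on which the admissible double cover swaps the two genus-$h$ components, which forces $(C_1;\dots)\cong(C_2;\dots)$; this locus is a positive multiple of the diagonal $\Delta_o$. Establishing that is the heart of \cite{AGNES}, and here all the real content sits there: what remains is only to carry it along the extra $n$ marked points.

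The first step is to set up the commuting square. Since the $n$ extra marked points of $\Mbar_{h,k}\times\Mbar_{h,k+n}$ all lie on the second factor, and $i_n$ carries them to the last $n$ markings of $\Mbar_{g,2m+n}$, forgetting those markings on the target amounts to forgetting them on the second factor of the source: writing $f_n\colon\Mbar_{h,k+n}\to\Mbar_{h,k}$ for the map forgetting the last $n$ markings and $q_n:=\mathrm{id}_{\Mbar_{h,k}}\times f_n$, one has $\overline{p}_n\circ i_n=i_0\circ q_n$. This is the usual fact that gluing and forgetting commute when the forgotten points are disjoint from the glued ones; concretely one only has to match indices and to check that the stabilization performed after forgetting on the glued curve agrees with stabilizing the second factor first, which is the case because the gluing points $y_1,\dots,y_{k-m}$ are never among those being forgotten. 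Restricting to the interiors gives the analogous identity $\alpha_0\circ q_n^o=q_n\circ\alpha$, where $f_n^o\colon\M_{h,k+n}\to\M_{h,k}$ is the forgetful map, $q_n^o:=\mathrm{id}\times f_n^o$, $\alpha$ is the inclusion of \eqref{eq3}, and $\alpha_0$ is the analogous inclusion of open loci for the non-starred product, so that $j_n=i_n\circ\alpha$ and $j_0=i_0\circ\alpha_0$. Getting this square right, with the correct indexing, is really the only point specific to the lemma; everything else is formal or imported from \cite{AGNES}.

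It then remains to compute. Using \eqref{classecoh}, functoriality of pullback in cohomology, $j_n=i_n\circ\alpha$, and the two squares,
\begin{align*}
(j_n)^*[\Pbar_{g\ra h, 2m, n}]
&=\alpha^*\,i_n^*\,\overline{p}_n^*[\Bbar_{g\ra h, 2m}]
=\alpha^*\,q_n^*\,i_0^*[\Bbar_{g\ra h, 2m}]\\
&=(q_n^o)^*\,\alpha_0^*\,i_0^*[\Bbar_{g\ra h, 2m}]
=(q_n^o)^*\,(j_0)^*[\Bbar_{g\ra h, 2m}].
\end{align*}
Since $\Pbar_{g\ra h, 2m, 0}=\Bbar_{g\ra h, 2m}$, the $n=0$ case gives $(j_0)^*[\Bbar_{g\ra h, 2m}]=c\,[\Delta_o]$ for some $c\in\mathbb{Q}_{>0}$, hence $(j_n)^*[\Pbar_{g\ra h, 2m, n}]=c\,(q_n^o)^*[\Delta_o]$. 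Finally, $f_n^o$ is smooth — forgetting a point from a smooth curve never forces a contraction, and the fibres are open subsets of the curves — so $q_n^o$ is smooth, and since by construction $(q_n^o)^{-1}(\Delta_o)=\Delta_o^n$, flat pullback yields $(q_n^o)^*[\Delta_o]=[\Delta_o^n]$. Therefore $(j_n)^*[\Pbar_{g\ra h, 2m, n}]=c\,[\Delta_o^n]$ with $c\in\mathbb{Q}_{>0}$, which is the assertion (the coefficient $\alpha$ in the statement being this $c$). The main obstacle is the commuting square $\overline{p}_n\circ i_n=i_0\circ q_n$: one must be careful that the markings forgotten by $\overline{p}_n$ are precisely the images under $i_n$ of the last $n$ markings of the second factor, and that the stabilization interacts trivially with the glued nodes.
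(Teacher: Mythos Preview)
Your proof is correct and follows essentially the same approach as the paper: both reduce to the $n=0$ case from \cite{AGNES} via the commutative square relating the gluing maps $i_n,i_0$ (or $j_n,j$) and the forgetful maps, then use functoriality of pullback together with $(id\times\pi_n)^*[\Delta_o]=[\Delta_o^n]$. The only cosmetic difference is that the paper sets up the square directly with source $\M_{h,k}\times\M_{h,k+n}$ (so no stabilization issue arises on the source side), whereas you first pass through the compactified square $\overline{p}_n\circ i_n=i_0\circ q_n$ and then restrict; your extra justification of $(q_n^o)^*[\Delta_o]=[\Delta_o^n]$ via flatness is a detail the paper leaves implicit.
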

	\begin{proof}
		Consider the following commutative diagram

		\begin{equation*}\label{commutative diagram: gluing maps}
			\begin{tikzcd}
				{\M_{h,k}\times \M_{h,k+n}} \arrow[d, "id\times\pi_n"'] \arrow[r, "j_n"] & \Mbar_{g,2m+n} \arrow[d, "p_n"'] \\
				{\M_{h,k}\times \M_{h,k}} \arrow[r, "j"']        & {\Mbar_{g,2m}},                         
			\end{tikzcd}
		\end{equation*}
	
		where $\pi_n: \M_{h,k+n}\ra   \M_{h,k}$  is  the morphism forgetting the last $n$ points. By \cite[Proposition 1.5]{AGNES} we have $j^*[\Bbar_{g\ra h, 2m}]=\alpha [\Delta_o]$. Using this, together with \eqref{classecoh}, we get $(j_n)^*[\Pbar_{g\ra h, 2m, n}]=(j_n)^*(p_n)^*[\Bbar_{g\ra h, 2m}]=(p_n\circ j_n)^*[\Bbar_{g\ra h, 2m}]= (j\circ (id\times\pi_n))^*[\Bbar_{g\ra h, 2m}]=(id\times\pi_n)^*j^*[\Bbar_{g\ra h, 2m}]=(id\times\pi_n)^*(\alpha [\Delta_o])=\alpha [\Delta_o^n]$.
	\end{proof}

	\begin{theorem}\label{thm:main}
		Let $g, m, h, k\geq 0$ be such that $g\geq 2h$ and $g+m=k+2h-1$. Suppose that  $H^{3h-3+k,0}(\Mbar_{h,k})\neq 0$. If $m\geq 1$ or $g\geq 4h+2$ then 
		\begin{gather*}
			[\Pbar_{g\ra h, 2m, n}]\in H^{6h-6+2k} (\Mbar_{g,2m+n}) 
		\end{gather*}
		and \begin{gather*}
			[\P_{g\ra h, 2m, n}]\in H^{6h-6+2k} (\M_{g,2m+n})
		\end{gather*}
		are non-tautological for all $n\geq 0$.
	\end{theorem}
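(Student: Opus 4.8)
The plan is to mimic the $n=0$ case from \cite{AGNES} using the tools assembled in the preliminaries. Recall the strategy: a tautological class pulls back along a gluing map to something with a tautological Künneth decomposition (Proposition \ref{GrPa}), hence with contributions only in bidegrees $H^{p,p}\otimes H^{q,q}$. So it suffices to exhibit a gluing map along which $[\Pbar_{g\ra h,2m,n}]$ pulls back to a class with a nonzero $H^{p,0}\otimes H^{0,p}$ (or mixed) contribution. The natural map to use is $i_n\colon \Mbar_{h,k}\times\Mbar_{h,k+n}\to\Mbar_{g,2m+n}$ introduced above.

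First I would compute $(i_n)^*[\Pbar_{g\ra h,2m,n}]$. By Lemma \ref{diagonale} we already know that the restriction to the open part, $(j_n)^*[\Pbar_{g\ra h,2m,n}]=\alpha[\Delta_o^n]$ with $\alpha\in\mathbb{Q}_{>0}$; I would upgrade this (or argue directly as in \cite[Prop.\ 1.5]{AGNES}) to $(i_n)^*[\Pbar_{g\ra h,2m,n}]=\alpha[\Delta^n]+(\text{boundary-supported terms})$ on $\Mbar_{h,k}\times\Mbar_{h,k+n}$. Next, by hypothesis $H^{3h-3+k,0}(\Mbar_{h,k})\neq 0$; pick $0\neq e$ in this group. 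Pulling $e$ back along the forgetful map $\pi_n\colon\Mbar_{h,k+n}\to\Mbar_{h,k}$ (as in Lemma \ref{lemma1}) gives a nonzero class $\pi_n^*e\in H^{3h-3+k,0}(\Mbar_{h,k+n})$. The Künneth decomposition of $[\Delta^n]=(\mathrm{id}\times\pi_n)^*[\Delta]$ then contains the term $\pm\, e\otimes\widehat{e}$ pulled back, i.e.\ $\pm\, e\otimes \pi_n^*\widehat{e}$, living in $H^{3h-3+k,0}(\Mbar_{h,k})\otimes H^{0,3h-3+k}(\Mbar_{h,k+n})$ (Lemma \ref{lemma2}); this is a nonzero contribution in a bidegree forbidden for tautological classes, so $[\Pbar_{g\ra h,2m,n}]$ is non-tautological. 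For the open version, apply $\iota^*$ and use \eqref{eq2}, \eqref{eq3}, and Lemma \ref{lemma3}: the class $e$ restricts nontrivially to $H^{3h-3+k}(\M_{h,k})$, the diagonal class $[\Delta_o^n]$ still carries the mixed-Hodge-type term, and one concludes $[\P_{g\ra h,2m,n}]$ is non-tautological in $H^{6h-6+2k}(\M_{g,2m+n})$.

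The degree bookkeeping needs a sanity check: $\dim\Mbar_{h,k}=3h-3+k$, so $e$ is a top holomorphic form and $[\Delta]\in H^{2(3h-3+k)}$, matching $[\Pbar_{g\ra h,2m,n}]\in H^{6h-6+2k}$ after using $g+m=k+2h-1$; I would verify this identity carefully since it is what makes the diagonal the relevant cycle. I would also record the case distinctions forced on $k$ when $h=0,1$ (stability), exactly as in the setup before Lemma \ref{diagonale}.

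The main obstacle I anticipate is the passage from the open statement of Lemma \ref{diagonale} to the statement on $\Mbar_{h,k}\times\Mbar_{h,k+n}$, i.e.\ controlling the boundary-supported terms in $(i_n)^*[\Pbar_{g\ra h,2m,n}]$. In the $n=0$ case this is handled in \cite[Prop.\ 1.5]{AGNES} by an analysis of which boundary strata of $\Mbar_{g,2m}$ the gluing map hits and a dimension/excess-intersection count; here one must check that pulling back along $p_n$ (equivalently, adding the $n$ extra forgotten points) does not create new components of the preimage that could contribute in the offending bidegree. Using the commutative square from Lemma \ref{diagonale} and the flatness/compatibility of forgetful maps, the boundary terms should be pushforwards from proper boundary strata and hence of pure Hodge type $(p,p)$ on each factor, so they cannot cancel the $e\otimes\pi_n^*\widehat e$ term; making this last point precise is the crux of the argument. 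Everything else is a routine transcription of \cite{AGNES} with $\Mbar_{h,k}$ replaced by $\Mbar_{h,k+n}$ in the second factor.
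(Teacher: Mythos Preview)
Your approach is the same as the paper's, and the skeleton is correct. Two points need fixing.

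First, the reason the boundary term $B'_n$ in $(i_n)^*[\Pbar_{g\ra h,2m,n}]=\alpha[\Delta^n]+B'_n$ cannot cancel the contribution $e\otimes\pi_n^*\hat{e}$ is \emph{not} that classes pushed from boundary strata have K\"unneth components only of type $H^{p,p}\otimes H^{q,q}$; that is false in general. The correct argument is precisely the tool you list for the open case: by Lemma~\ref{lemma3} both $e$ and $\pi_n^*\hat{e}$ restrict nontrivially to the interior, so $e\otimes\pi_n^*\hat{e}$ restricts nontrivially to $\M_{h,k}\times\M_{h,k+n}$, whereas $B'_n$ restricts to zero there by construction. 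Hence $B'_n$ cannot carry this term, and $i_n^*[\Pbar_{g\ra h,2m,n}]$ does.

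Second, your treatment of the open class $[\P_{g\ra h,2m,n}]$ skips a genuine step. Proposition~\ref{GrPa} tests tautologicity via pullback along gluing maps into $\Mbar_{g,2m+n}$, so simply ``applying $\iota^*$'' and looking at $[\Delta_o^n]$ is not a test. What the paper does (and what is needed) is to write, by excision, $[\Pbar_{g\ra h,2m,n}]=[\P_{g\ra h,2m,n}]+B_n$ with $B_n$ supported on $\partial\M_{g,2m+n}$, and then show separately that $i_n^*B_n$ has \emph{no} component in $H^{3h-3+k,0}(\Mbar_{h,k})\otimes H^{0,3h-3+k}(\Mbar_{h,k+n})$; combined with the previous point this forces $i_n^*[\P_{g\ra h,2m,n}]$ to carry the forbidden term. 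This vanishing for $i_n^*B_n$ is exactly point~(c) of \cite[Theorem~C]{AGNES}: it is a boundary analysis \emph{upstairs} in $\Mbar_{g,2m+n}$, distinct from the boundary of $\M_{h,k}\times\M_{h,k+n}$ that you discuss in your last paragraph, and it is not handled by your ``pure $(p,p)$'' heuristic. You should invoke that argument (with the obvious changes for the extra $n$ points) rather than the reasoning you propose.
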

	\begin{remark}
	    For $n=0$ this is \cite[Theorem C]{AGNES}.
	\end{remark}
	\begin{proof}
		First observe that, since the tautological ring $RH^*(\M_{g,2m+n})\subset H^*(\M_{g,2m+n})$ is defined as the image of $RH^*(\Mbar_{g,2m+n})\subset H^*(\Mbar_{g,2m+n})$ under the restriction map, it suffices to show that $[\P_{g\ra h, 2m, n}]\in H^{6h-6+2k} (\M_{g,2m+n})$ is non-tautological for all $n\geq 0$.
		By excision and \eqref{eq2} we can write
		\begin{gather}\label{equality}
			[\Pbar_{g\ra h, 2m, n}]= [\P_{g\ra h, 2m, n}]+ B_n,
		\end{gather}
		where $B_n$ is an algebraic cycle pushed forward from the boundary $\partial\M_{g,2m+n}$. The pullback of \eqref{equality} under the gluing morphism $i_n: \Mbar_{h,k} \times \Mbar_{h,k+n} \rightarrow \Mbar_{g,2m+n}$ gives the equality
		\begin{gather}\label{equality2}
			i_n^*[\Pbar_{g\ra h, 2m, n}]= i_n^*[\P_{g\ra h, 2m, n}]+ i_n^*B_n
		\end{gather}
		in $H^{3h-3+k,3h-3+k}\left(\Mbar_{h,k} \times \Mbar_{h,k+n}\right)$. Now consider the Hodge-K\"unneth decomposition of $H^{3h-3+k,3h-3+k}\left(\Mbar_{h,k} \times \Mbar_{h,k+n}\right)$:
		\begin{gather}
			H^{3h-3+k,3h-3+k}\left(\Mbar_{h,k} \times \Mbar_{h,k+n}\right) \cong \bigoplus_{a+c=b+d=3h-3+k}H^{a,b}(\Mbar_{h,k}) \otimes H^{c,d}(\Mbar_{h,k+n})=\\
			\label{equazione}=H^{3h-3+k,0}(\Mbar_{h,k}) \otimes H^{0,3h-3+k}(\Mbar_{h,k+n}) \oplus ...
		\end{gather}
		Note that by hypothesis we have $H^{3h-3+k,0}(\Mbar_{h,k})\neq 0$, and that this, together with Lemma \ref{lemma1}, implies that the highlighted term in \eqref{equazione} is non-zero. 
		Let us analyze the contributions of the various pieces in \eqref{equality2} belonging to the highlighted term $H^{3h-3+k,0}(\Mbar_{h,k}) \otimes H^{0,3h-3+k}(\Mbar_{h,k+n})$ of the Hodge-K\"unneth decomposition. We start by analyzing $	i_n^*[\Pbar_{g\ra h, 2m, n}]$.
		By Lemma \ref{diagonale}, formula \eqref{eq3}, and excision we have
		\begin{gather}\label{sulbordo}
			i_n^*[\Pbar_{g \rightarrow h,2m, n}]=\alpha [\Delta^n]+B'_n,
		\end{gather}
		where $B'_n$ is an algebraic cycle pushed forward from the boundary $\partial(\M_{h,k} \times \M_{h,k+n})=(\partial\M_{h,k}\times\M_{h,k+n} )\cup (\M_{h,k}\times \partial \M_{h,k+n})$. Let $0\neq e\in H^{3h-3+k,0}(\Mbar_{h,k})$. Then by Lemma \ref{lemma2} we get that $e\otimes \hat{e}\in H^{3h-3+k,0}(\Mbar_{h,k})\otimes  H^{0,3h-3+k}(\Mbar_{h,k})$ is a non-zero contribution to the Hodge-K\"unneth decomposition of $[\Delta]$, and hence that 
		\begin{gather}\label{contribution}
			(id\times\pi_n)^*(e\otimes \hat{e})=e\otimes \pi_n^*(\hat{e})\in H^{3h-3+k,0}(\Mbar_{h,k}) \otimes H^{0,3h-3+k}(\Mbar_{h,k+n})
		\end{gather} 
		is a non-zero contribution to the Hodge-K\"unneth decomposition of the class $ [\Delta^n]\in H^{3h-3+k,3h-3+k}\left(\Mbar_{h,k} \times \Mbar_{h,k+n}\right)$. Now note that \eqref{contribution} restricts non-trivially to the interior. This follows from the fact that, by Lemma \ref{lemma3}, $e\in H^{3h-3+k,0}(\Mbar_{h,k})$ restricts to a non-zero class in $H^{3h-3+k}(\M_{h,k})$ (and the same is true for $\pi_n^*(\hat{e})$). This implies that the contribution \eqref{contribution} cannot be cancelled by $B'_n$ in \eqref{sulbordo}, and thus that $i_n^*[\Pbar_{g \rightarrow h,2m, n}]$ has a non-zero contribution in $H^{3h-3+k,0}(\Mbar_{h,k}) \otimes H^{0,3h-3+k}(\Mbar_{h,k+n})$. On the other hand, we have that the term $i_n^*B_n$ in \eqref{equality2} has no contribution in $H^{3h-3+k,0}(\Mbar_{h,k}) \otimes H^{0,3h-3+k}(\Mbar_{h,k+n})$. The proof of this fact is, with the obvious changes, exactly the same as the one of point $(c)$ in \cite[Theorem C]{AGNES}, hence we omit it here and refer to \cite{AGNES}. Hence \eqref{equality2}, together with the previous considerations, imply that $i_n^*[\P_{g\ra h, 2m, n}]$ has a non-zero contribution in $H^{3h-3+k,0}(\Mbar_{h,k}) \otimes H^{0,3h-3+k}(\Mbar_{h,k+n})$. From this follows that $[\P_{g\ra h, 2m, n}]$ is non-tautological. Indeed, if we suppose by  contradiction that $[\P_{g\ra h, 2m, n}]$   is tautological, then by  \cite[Proposition 1]{GP} (already discussed in Prop. \ref{GrPa}) we have that  $i_n^*[\P_{g\ra h, 2m, n}]$ has tautological K\"unneth decomposition, i.e., it belongs to $RH^*(\Mbar_{h,k}) \otimes RH^*(\Mbar_{h,k+n})$, implying that all its contributions in the Hodge-K\"unneth decomposition are of the form  $H^{p,p}(\Mbar_{h,k}) \otimes H^{q,q}(\Mbar_{h,k+n})$. This gives a contradiction.

	\end{proof}
	As in \cite{AGNES}, using the non-vanishing of certain spaces of holomorphic forms on $\Mbar_{h,k}$, we get:
	\begin{corollary}
		\label{corollario}
		For $g\geq 2$, $m\geq 0$. If $g+m=12$ or $g+m\geq 16$ even, we have that 
		\begin{gather*}
			[\P_{g\ra 1, 2m, n}]\in H^{2g+2m-2} (\M_{g,2m+n})
		\end{gather*}
		is non-tautological for all $n\geq 0$. For $g\geq 4$ and $m\geq 0$, if $g+m\geq 17$ odd, then
		\begin{gather*}
			[\P_{g\ra 2, 2m, n}]\in H^{2g+2m} (\M_{g,2m+n})
		\end{gather*}
		is non-tautological for all $n\geq 0$.
		\begin{proof}
			The first statement is obtained by applying Theorem \ref{thm:main} for $h=1$ and $k\geq 11$ odd, $k\neq 13$. Indeed by Proposition \ref{nonvanishing}, we have for these $k$'s that $H^{k,0}(\Mbar_{1,k})\neq 0$. For the second statement we use that, for $h=2$ and $k\geq 14$ even, we have $H^{k+3,0}(\Mbar_{2,k})\neq 0$.	
		\end{proof}
	\end{corollary}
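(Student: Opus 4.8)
The plan is to obtain both assertions as direct specializations of Theorem \ref{thm:main}, the only work being to match the numerical hypotheses and to supply the non-vanishing of holomorphic forms recorded in Proposition \ref{nonvanishing}.

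For the first assertion I would set $h = 1$ in Theorem \ref{thm:main}. The relation $g + m = k + 2h - 1$ then forces $k = g + m - 1$, while $g \geq 2h$ is the hypothesis $g \geq 2$. The cohomological input $H^{3h-3+k,0}(\Mbar_{h,k}) \neq 0$ becomes $H^{k,0}(\Mbar_{1,k}) \neq 0$, which by Proposition \ref{nonvanishing}(1) holds whenever $k \geq 11$ is odd and $k \neq 13$. The assumption ``$g+m = 12$ or $g+m \geq 16$ even'' translates this into $k \in \{11, 15, 17, 19, \dots\}$, each value being odd, at least $11$, and different from $13$; the deliberate omission of $g+m = 14$ is precisely what keeps us clear of the forbidden value $k = 13$. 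The remaining dichotomy ``$m \geq 1$ or $g \geq 4h+2$'' is automatic: if $m \geq 1$ it is the first alternative, and if $m = 0$ then $g = g+m \geq 12 \geq 6 = 4h+2$. Since $6h-6+2k = 2(g+m)-2$ is the claimed degree, Theorem \ref{thm:main} yields that $[\P_{g\ra 1, 2m, n}]$ (and $[\Pbar_{g\ra 1, 2m, n}]$) is non-tautological for all $n \geq 0$.

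For the second assertion I would instead set $h = 2$, so that $k = g+m-3$ and $g \geq 2h$ is the hypothesis $g \geq 4$. Now $H^{3h-3+k,0}(\Mbar_{2,k}) \neq 0$ reads $H^{k+3,0}(\Mbar_{2,k}) \neq 0$, which by Proposition \ref{nonvanishing}(2) holds for $k \geq 14$ even, and ``$g+m \geq 17$ odd'' gives exactly $k = g+m-3 \geq 14$ even. Once more ``$m \geq 1$ or $g \geq 4h+2 = 10$'' holds automatically, since $m = 0$ forces $g \geq 17$, and $6h-6+2k = 2(g+m)$ is the asserted degree. I do not expect a genuine obstacle in either part: the argument is pure bookkeeping on the numerical constraints, and the only points requiring care are the parity observation that $k = g+m-3$ is even exactly when $g+m$ is odd, and the verification that in each case the resulting values of $k$ land strictly inside the ranges allowed by Proposition \ref{nonvanishing}.
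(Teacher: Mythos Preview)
Your proposal is correct and follows exactly the paper's approach: apply Theorem \ref{thm:main} with $h=1$ (respectively $h=2$) and invoke Proposition \ref{nonvanishing} for the required non-vanishing of holomorphic forms. You have simply spelled out the numerical bookkeeping (the translation of the hypotheses on $g+m$ into the allowed range for $k$, the automatic verification of ``$m\geq 1$ or $g\geq 4h+2$'', and the degree match) more explicitly than the paper does.
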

	This proves Theorem \ref{teoprincipale}.
		There exists a non-tautological algebraic cycle on $\M_{g,n}$ for any $n\geq 0$ such either $2\leq g\leq 12$ and $2g+n\geq 24$, or $2g+n\geq 32$. In particular, for such $g$ and $n$ we have $RH^{2*}(\M_{g,n})\neq H^{2*}(\M_{g,n})$.
  
	\begin{remark}
     We observe that to deduce Theorem \ref{teoprincipale} it is sufficient to consider the first statement in Corollary \ref{corollario} together with the results in \cite{AGNES} (see the introduction).
	\end{remark}


\begin{thebibliography} {99}

	\bibitem{AGNES} V. Arena,  S. Canning, E. Clader, R. Haburcak, A.Q. Li, S.C. Mok and C. Tamborini, \newblock {\em Holomorphic forms and non-tautological cycles on moduli spaces of curves}, \newblock Preprint 2023, arXiv:2307.08830

    \bibitem{CLP} S. Canning, H. Larson and S. Payne, \newblock {\em Extensions of tautological rings and motivic structures in the cohomology of Mg,n}, \newblock Preprint 2024, arXiv:2402.03874

   \bibitem{FP} C. Faber and R. Pandharipande, \newblock \emph{Tautological and non-tautological cohomology of the moduli space of curves}, \newblock Handbook of Moduli (G. Farkas and I. Morrison, eds.), Volume I, 293--330, Advanced Lectures in Mathematics, International Press, Boston, 2013.

    \bibitem{GP} T. Graber and R. Pandharipande, \newblock \emph{Constructions of nontautological classes of moduli spaces of curves}, \newblock Michigan Mathematical Journal 51, 93-110, (2001).		

    \bibitem{Keel} S. Keel, \newblock \emph{Intersection theory of moduli space of stable n-pointed curves of genus zero}, \newblock Trans. Amer. Math. Soc., 330(2):545–574, 1992.	

    \bibitem{Pet14} D. Petersen, \newblock \emph{The structure of the tautological ring in genus one}, \newblock Duke Math. J., 163(4):777–793, 2014
    
	\bibitem{vZ} J. van Zelm, \newblock \emph{Nontautological Bielliptic Cycles}, \newblock Pacific Journal of Mathematics, Vol. 294 (2018), No. 2, 495-504 
   
\end{thebibliography}
\end{document}